\definecolor{bluecite}{HTML}{0875b7}
\DeclareMathOperator*{\argmin}{arg\,min}
\newcommand{\ler}[1]{\left(#1\right)}
\newcommand{\lers}[1]{\left\{#1\right\}}
\newcommand{\dd}{\mathrm{d}}
\newcommand{\be}{\begin{equation}}
\newcommand{\ee}{\end{equation}}
\newcommand{\abs}[1]{\left|#1\right|}
\newcommand{\norm}[1]{\left|\left|#1\right|\right|}
\newcommand{\Rn}{\mathbb{R}^n}
\newcommand{\supp}{\mathrm{supp}}
\newcommand{\isom}{\mathrm{Isom}}
\newcommand{\R}{\mathbb{R}}
\newtheorem{theorem}{Theorem}[section]
\newtheorem{lemma}[theorem]{Lemma}
\newtheorem{corollary}[theorem]{Corollary}
\newtheorem{remark}[theorem]{Remark}
\numberwithin{equation}{section}
\subjclass[2020]{
49Q22 
54E40
}
\keywords{Isometries, Wasserstein Space, Metric space}
\title{Restoring Wasserstein Rigidity with a single point}
\author[Zolt\'an M. Balogh]{Zolt\'an M. Balogh}
\address{Zolt\'an M. Balogh, Universit\"at Bern\\ Mathematisches Institut (MAI)\\ Sidlerstrasse 12\\ 3012 Bern\\ Schweiz}
\email{zoltan.balogh@unibe.ch}
\author[Eric Str\"oher]{Eric Str\"oher}
\address{Eric Str\"oher, Universit\"at Bern\\ Mathematisches Institut (MAI)\\ Sidlerstrasse 12\\ 3012 Bern\\ Schweiz}
\email{eric.stroeher@unibe.ch}
\author[D\'aniel Virosztek]{D\'aniel Virosztek}
\address{D\'aniel Virosztek, HUN-REN Alfr\'ed R\'enyi Institute of Mathematics\\ Re\'altanoda u. 13-15.\\Budapest H-1053\\ Hungary}
\email{virosztek.daniel@renyi.hu}
\thanks{Z. M. Balogh and E. Str\"oher are supported by the Swiss National Science Foundation, Grant Nr. {200020\_228012}.}
\thanks{D. Virosztek is supported by the Momentum program of the Hungarian Academy of Sciences under grant agreement no. LP2021-15/2021, by the Hungarian National Research, Development and Innovation Office (NKFIH) under grant agreement no. Excellence\_151232, and partially supported by the ERC Synergy Grant No. 810115.}
\begin{document}
    \begin{abstract}
    We consider isometrically flexible Wasserstein spaces and demonstrate that adding a single point to the underlying metric space makes these Wasserstein spaces rigid. 
    \end{abstract}

\maketitle 

\tableofcontents

\section{Introduction}

Given a Polish metric space $(X, d)$ and a parameter $p \geq 1$, the $p$-Wasserstein space $\mathcal{W}_p(X, d)$ is the set of probability measures $\mu$ with finite $p$-moment. To be more precise, a Borel probability measure $\mu$ is in the $p$-Wasserstein space if
$$
\int_X d^p(x_0, x)d\mu(x)< \infty
$$ 
for some (and hence any) $x_0 \in X$. We can consider the Wasserstein distance between two measures
$$
d_{\mathcal{W}_p}(\mu, \nu)= \inf_{\pi \in \Pi(\mu, \nu)} \left(\int_{X \times X} d^p(x, y)d \pi(x, y)\right)^{\frac{1}{p}},
$$ 
where $\Pi(\mu, \nu)$ is the set of couplings between $\mu$ and $\nu$. We recall that $\pi \in \Pi(\mu, \nu)$ if $\pi \in \mathcal{P}(X \times X)$ with $p_{x\#}\pi=\mu$ and $p_{y\#}\pi=\nu$ for the functions $p_x(x, y)=x$ and $p_y(x,y)=y$, and $\#$ represents the push-forward map presented in \eqref{eq:pushforward}. Many properties of Wasserstein spaces have been already extensively studied, for an overview of the area we refer to the monographs \cite{ambrosio_lectures_2021}, \cite{figalli_invitation_2021} or \cite{villani_optimal_2009}.

\subsection{Motivation and background results}
One question that has only been asked in recent years about Wasserstein spaces concerns their isometry groups. Indeed, if $\phi$ is a measurable map from a metric space $(X, d)$ to itself, we can consider the push-forward map $\phi_{\#}$ defined as 
\begin{equation}
\label{eq:pushforward}
\phi_{\#}(\mu)(A)= \mu (\phi^{-1}(A)),
\end{equation}
for any $A \subset X$. This push-forward map is a map acting on probability measures, and if $\phi$ is an isometry on $(X, d)$, it is easy to see that $\phi_{\#}$ is also an isometry from the Wasserstein space $\mathcal{W}_p(X, d)$ to itself. We recall that an isometry is a distance-preserving surjective map.

 An isometry of the Wasserstein space $\Phi$ that can be written as the the push-forward of an isometry of $(X, d)$, i.e. $\Phi=\phi_{\#}$, is called a trivial isometry, and the push-forward operation induces an embedding between the isometry group of $(X,d)$ and the isometry group of $\mathcal{W}_p(X, d)$. 

A central problem in this context is whether the isometry group of the Wasserstein space is strictly bigger than the isometry group of the base space. If this is the case, we say that the Wasserstein space  $\mathcal{W}_p(X, d)$ is flexible; if all isometries are trivial, then $\mathcal{W}_p(X, d)$ is isometrically rigid. 

The first flexibility result was given by Kloeckner \cite{kloeckner_geometric_2010}, who showed that for $p=2$, the Wasserstein space $\mathcal{W}_2(\Rn, d_E)$ is flexible for any $n \geq 1$, where $d_E$ is the standard Euclidean distance. This result was extended by \cite{geher_isometry_2022}, where the authors showed that $\mathcal{W}_2(H, d_H)$ allows non-trivial isometries whenever $(H, d_H)$ is a separable Hilbert space; as shown in \cite{che_isometric_2024}, this is still true if the base space is given by the product of any proper metric space and a separable Hilbert space, $X=Y \times H$ (with the $l_2$ product metric). 
Another way of constructing flexible $p$-Wasserstein spaces for all $p \geq 1$ is to consider the interval $[0,1]$ as base space with the $p$-dependent metric $d(x,y)=|x-y|^{1/p}$, as shown in \cite{geher_isometric_2020} and \cite{geher_isometry_2022}. 

An interesting aspect is that these Wasserstein spaces are (so far) the only examples of flexible spaces that have been found. For all other spaces which have been investigated under this premise it has been shown that the Wasserstein space only has trivial isometries. For example, Bertrand and Klockner showed in \cite{bertrand_geometric_2012} that if $(X,d)$ is a Riemannian manifold with negative sectional curvature (also known as a Hadamard space), then $\mathcal{W}_2(X, d)$ is isometrically rigid. The same is true if the manifold is closed, with strictly positive curvature, as shown by Santos-Rodrigues in \cite{santos-rodriguez_isometries_2022}. In \cite{balogh_wasserstein_2025}, Balogh, Str\"oher, Titkos and Virosztek showed that when the norm $N$ is sufficiently smooth, the Wasserstein space $\mathcal{W}_p(\Rn, d_N)$ is isometrically rigid for $p \neq 2$. 
Even in the subriemannian case Balogh, Titkos and Virosztek showed in \cite{balogh_isometries_2025} and \cite{balogh_isometric_2026} the rigidity of the Wasserstein space over the Heisenberg group equipped with the Koranyi metric for any $p \geq 1$, and for a general Carnot group when $p=1$. 

Furthermore, even for flexible Wasserstein spaces, their flexibility seems to be instable, in the sense that a small variation of a parameter of the space is enough to create a rigid Wasserstein space. The easiest way to see this is to look at the parameter $p$. If we consider as the base space $(X, d)$ the Euclidean plane (resp. the $[0,1]$ interval with the absolute value distance), then as described above for the parameter $p=2$ (resp. $p=1$) the Wasserstein space $\mathcal{W}_p(X, d)$ is isometrically flexible. However, for any other value of the parameter $p$, even very close to $2$ (resp. close to $1$), the results of \cite{geher_isometry_2022} and \cite{geher_isometric_2020} show that the space $\mathcal{W}_p(X, d)$ is once again isometrically rigid. 

This instability is also visible when we consider the metric of the underlying space. Indeed, we can consider the Euclidean norm to be the $l_2$ norm, where the $l_q$ norm is given by the formula 
$$
l_q(x)=\left(\sum_{i=1}^n |x_i|^q \right)^{1/q}.
$$ Then, even though for $q=2$ the Wasserstein space $\mathcal{W}_2(\Rn, l_2)$ is isometrically flexible, \cite{balogh_wasserstein_2025} and \cite{stroeher_lp_case} showed that for any $q \neq 2, q \geq 1$ the space $\mathcal{W}_p(\Rn, l_q)$ is rigid for all $p \geq 1$; once again a slight perturbation of a parameter is enough to restore the rigidity of the Wasserstein space. 

\subsection{Main theorems}
In this article, we show that by starting from a flexible Wasserstein space, the rigidity property is restored if we slightly modify the underlying space. Indeed, we will show in both the case of the interval $[0,1]$ and the case of the Euclidean plane $\R^2$ that adding a single point, which we will denote by $q$, is enough to break the flexibility of the Wasserstein space. Notice that, while we only consider the case of the plane, the same argument shows that adding a single point to any (finite-dimensional) Hilbert space equipped with a similar metric will give the same rigidity in its Wasserstein space. 

For the set $[0,1] \cup \{q\}$, we will consider a distance $d_1$ which places the point $q$ uniformly far away from the interval $[0,1]$; we thus have the following Theorem.

\begin{theorem}
\label{thm_1}
The $1$-Wasserstein space $\mathcal{W}_1([0,1]\cup\{q\}, d_1)$ is isometrically rigid. That is, for all $\Phi \in \isom\ler{\mathcal{W}_1([0,1]\cup\{q\}, d_1)}$ there is a $\psi \in \isom\ler{[0,1]\cup\{q\}, d_1}$ such that $\Phi=\psi_{\#}.$
\end{theorem}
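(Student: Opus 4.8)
The plan is to reduce everything to the behaviour of $\Phi$ on the Dirac masses, using the added point $q$ to detect them metrically --- precisely the feature that is unavailable in the flexible space $\mathcal{W}_1([0,1],|\cdot|)$. Throughout I write $c := d_1(q,x)$ for the (constant) distance from $q$ to the interval, and I use that ``uniformly far away'' means $c > 1 = \operatorname{diam}([0,1])$. Every $\mu$ decomposes as $\mu = (1-s)\mu_0 + s\delta_q$ with $\mu_0 \in \mathcal{W}_1([0,1])$ and $s = \mu(\{q\})$, and since routing mass through $q$ always costs $2c > 2$, optimal plans never do so unless forced. This yields the elementary formulas $d_{\mathcal{W}_1}(\mu,\delta_q) = c\,\mu([0,1]) = c(1-s)$ and, for $x \in [0,1]$, $d_{\mathcal{W}_1}(\mu,\delta_x) = (1-s)\int_0^1 |x-y|\,d\mu_0(y) + sc$, which I would record first.

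First I would show $\Phi(\delta_q) = \delta_q$. Because $c > 1$, the diameter of the whole space is exactly $c$, and it is attained by a pair $(\omega,\nu)$ only when one of them is $\delta_q$ and the other lies in $\mathcal{W}_1([0,1])$. Hence, writing $M(\omega) = \{\nu : d_{\mathcal{W}_1}(\omega,\nu) = c\}$, one has $M(\delta_q) = \mathcal{W}_1([0,1])$, while $M(\mu_0) = \{\delta_q\}$ for every $\mu_0 \in \mathcal{W}_1([0,1])$ and $M(\mu) = \emptyset$ once $s \in (0,1)$. Thus $\delta_q$ is the unique point whose farthest-point set contains more than one element --- an isometry invariant --- so it is fixed. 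Consequently $\Phi$ preserves $\{\mu : d_{\mathcal{W}_1}(\mu,\delta_q) = c\} = \mathcal{W}_1([0,1])$ and restricts there to a self-isometry, the induced metric being the genuine $\mathcal{W}_1([0,1])$ metric (again because detours through $q$ are never optimal).

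The heart of the argument --- and the step I expect to be the main obstacle --- is to characterize the Dirac masses $\{\delta_x : x \in [0,1]\}$ inside $\mathcal{W}_1([0,1])$ purely metrically, using $\delta_q$. I would do this via metric midpoints. A point $\omega$ is a midpoint of $\mu \in \mathcal{W}_1([0,1])$ and $\delta_q$ iff $d_{\mathcal{W}_1}(\mu,\omega) = d_{\mathcal{W}_1}(\omega,\delta_q) = c/2$; the second equation forces $\omega = \tfrac12\omega_0 + \tfrac12\delta_q$, and an analysis of the resulting partial transport problem shows the first holds exactly when $\tfrac12\omega_0 \le \mu$, i.e. when $\omega_0$ is a probability measure dominated by $2\mu$. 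Therefore
\[
\{\text{midpoints of } \mu \text{ and } \delta_q\} \longleftrightarrow \{\omega_0 \in \mathcal{P}([0,1]) : \omega_0 \le 2\mu\},
\]
and this correspondence makes the midpoint set a singleton precisely when $\mu$ is a Dirac mass. Since $\Phi$ fixes $\delta_q$ and preserves midpoint sets, it maps $\{\delta_x\}_{x\in[0,1]}$ to itself. This is exactly where the non-trivial isometry of the flexible base --- which sends $\delta_a$ to $a\delta_0 + (1-a)\delta_1$ --- gets excluded, since that two-atom measure has a non-unique midpoint with $\delta_q$.

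It remains to upgrade from Diracs to all of $\mathcal{W}_1$. The map $x \mapsto \Phi(\delta_x)$, together with $\Phi(\delta_q) = \delta_q$, is an isometry of $([0,1]\cup\{q\}, d_1)$, hence equals the identity or the reflection $x \mapsto 1-x$; composing $\Phi$ with the (trivial) push-forward of this reflection if necessary, I may assume $\Phi(\delta_x) = \delta_x$ for all $x$. Then for any $\mu$, using that $\Phi^{-1}$ also fixes every Dirac, $d_{\mathcal{W}_1}(\Phi(\mu),\delta_x) = d_{\mathcal{W}_1}(\mu,\delta_x)$ for all $x \in [0,1]\cup\{q\}$. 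The value at $\delta_q$ recovers $s = \mu(\{q\})$, and the values at $\delta_x$, $x \in [0,1]$, recover $f(x) = \int_0^1 |x-y|\,d\mu_0(y)$; since $\mu_0 = \tfrac12 f''$ in the distributional sense, $\mu_0$ and hence $\mu$ is determined by its distances to Dirac masses. Therefore $\Phi(\mu) = \mu$, giving $\Phi = \psi_\#$ with $\psi$ the identity or the reflection. The only delicate points I anticipate are making the partial-transport computation in the midpoint step fully rigorous and confirming that $c > 1$ is exactly what ``uniformly far'' provides.
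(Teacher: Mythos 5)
Your proposal is correct, and it follows a genuinely different route from the paper's. The paper fixes $\delta_q$ by a similar diameter-type characterization, but then decomposes the space into the slices $S_t=\{\mu: \mu(\{q\})=1-t\}$, shows each slice is a rescaled copy of $\mathcal{W}_1([0,1],|\cdot|)$, invokes the full classification $\isom(\mathcal{W}_1([0,1]))=\{\mathrm{id}, r_{\#}, J, r_{\#}J\}$ from Geh\'er--Titkos--Virosztek, kills the flip $J$ on each slice by a cross-slice distance computation with the measures $(1-t)\delta_q+\tfrac{t}{2}(\delta_0+\delta_1)$ and $(1-\tfrac{2t}{3})\delta_q+\tfrac{2t}{3}\delta_0$, and finally synchronizes the choice of $\mathrm{id}$ versus $r_{\#}$ across slices. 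You instead characterize Dirac masses on $[0,1]$ intrinsically as the measures having a \emph{unique} metric midpoint with $\delta_q$ (your computation that the midpoints of $(\mu,\delta_q)$ correspond to $\{\omega_0\in\mathcal{P}([0,1]): \omega_0\le 2\mu\}$ is correct, and this set is a singleton exactly for Diracs), reduce to a base isometry, and finish by showing a measure is determined by its $\mathcal{W}_1$-distances to all Dirac masses via the potential function $f(x)=\int|x-y|\,\dd\mu_0(y)$ and $f''=2\mu_0$. Your version is more self-contained --- it never needs the nontrivial classification of isometries of the flexible space $\mathcal{W}_1([0,1])$ --- and it isolates the conceptual reason the extra point restores rigidity: the flip $J$ sends $\delta_a$ to $a\delta_0+(1-a)\delta_1$, which has many midpoints toward $q$, so $J$ cannot commute with fixing $\delta_q$. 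Your last step also handles all slices simultaneously, avoiding the paper's synchronization argument. Two points should be written out in full but are routine: the partial-transport verification that $d_{\mathcal{W}_1}(\mu,\tfrac12\omega_0+\tfrac12\delta_q)=c/2$ iff $\omega_0\le 2\mu$ (every coupling must send mass $1/2$ from $[0,1]$ to $q$ at cost $c/2$, and the remainder costs zero iff it is the identity coupling), and the recovery of the possible atoms of $\mu_0$ at the endpoints $0$ and $1$ from $f$ (e.g.\ from the one-sided derivatives $f'(0^+)=2\mu_0(\{0\})-1$ and $f'(1^-)=1-2\mu_0(\{1\})$), since $f''$ on the open interval only determines $\mu_0$ on $(0,1)$.
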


For $\R^2 \cup \{q\}$, we embed the plane horizontally into $\R^3$, set $q=(0,0,1)$, and we consider the distance $d_2$ on $\R^2 \cup \{q\}$ to be the standard Euclidean distance of $\R^3$. Then we have the following Theorem.

\begin{theorem}
\label{thm_2}
The $2$-Wasserstein space $\mathcal{W}_2(\R^2\cup\{q\}, d_2)$ is isometrically rigid. That is, for all $\Phi \in \isom\ler{\mathcal{W}_2(\R^2\cup\{q\}, d_2)}$ there is a $\psi \in \isom\ler{\R^2\cup\{q\}, d_2}$ such that $\Phi=\psi_{\#}.$
\end{theorem}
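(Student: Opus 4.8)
The plan is to show that every $\Phi\in\isom\ler{\mathcal{W}_2(\R^2\cup\{q\},d_2)}$ fixes all Dirac masses after a harmless normalization, and then to exploit the single elevated point $q$ to pin $\Phi$ down on every measure. First I would determine the base isometries: since $q$ is the unique isolated point of $(\R^2\cup\{q\},d_2)$ — every planar point has others arbitrarily close, whereas $d_2(q,z)\geq 1$ for all planar $z$ — any $\psi\in\isom\ler{\R^2\cup\{q\},d_2}$ must fix $q$, and then preservation of $d_2(q,\cdot)=\sqrt{|\cdot|^2+1}$ forces $\psi$ to preserve planar distance to the origin, so $\isom\ler{\R^2\cup\{q\},d_2}=O(2)$. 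Next I would show $\Phi$ preserves the set of Dirac masses, using a metric characterization of Dirac measures in the spirit of Kloeckner and Bertrand--Kloeckner; here the isolation of $q$ additionally singles out $\delta_q$ as the unique Dirac from which all others are at distance $\geq 1$, so $\Phi(\delta_q)=\delta_q$. The restriction of $\Phi$ to Diracs is then an isometry of the base, i.e. an element of $O(2)$, and after composing with its push-forward I may assume $\Phi$ fixes every Dirac mass.

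Second, I would read off the invariants $\Phi$ must preserve. For $\mu$ with planar part $\mu_{\mathrm{pl}}$ and mass $\alpha=\mu(\{q\})$, a direct computation gives $d_{\mathcal{W}_2}(\mu,\delta_x)^2=|x|^2-2\langle x,b\rangle+c$ with $b=\int z\,d\mu_{\mathrm{pl}}$ and $c=\alpha+\int|z|^2\,d\mu_{\mathrm{pl}}$, while $d_{\mathcal{W}_2}(\mu,\delta_q)^2=(1-\alpha)+\int|z|^2\,d\mu_{\mathrm{pl}}$. Since $\Phi$ fixes all these Diracs it preserves $b$, $c$ and $d_{\mathcal{W}_2}(\mu,\delta_q)$, and the two relations recover $\alpha$; hence $\Phi$ preserves the mass at $q$, the planar barycenter and the planar second moment. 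In particular $\Phi$ maps planar measures to planar measures, and writing $\mu=\alpha\delta_q+(1-\alpha)\hat\mu$ gives $\Phi(\mu)=\alpha\delta_q+(1-\alpha)\Psi_\alpha(\hat\mu)$ for maps $\Psi_\alpha$ on planar probability measures; the preserved barycenter and second moment force $\Psi_\alpha(\delta_a)=\delta_a$, and $K:=\Psi_0=\Phi|_{\mathrm{planar}}$ is a Dirac-fixing self-isometry of $\mathcal{W}_2(\R^2,d_E)$ — exactly the type of map (e.g. reflecting a measure about its own barycenter) responsible for the flexibility of the plane.

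Third, and this is the crux, I would defeat such $K$ with test measures anchored at $q$. The measures $\mu_{\alpha,a}:=\alpha\delta_q+(1-\alpha)\delta_a$ are fixed by $\Phi$, so for every planar $\hat\nu$ one has $d_{\mathcal{W}_2}(\mu_{\alpha,a},\hat\nu)=d_{\mathcal{W}_2}(\mu_{\alpha,a},K\hat\nu)$. The optimal transport from this two-atom source to $\hat\nu$ is a threshold problem: serving a target point $z$ from $q$ rather than from $a$ changes the cost by $|z|^2+1-|z-a|^2=1-|a|^2+2\langle z,a\rangle$, which is affine and monotone in the projection $\langle z,a\rangle$, so forcing mass $\alpha$ to be served from $q$ routes exactly the lowest $\alpha$-quantile of $\langle\cdot,a\rangle$ to $q$. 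Thus $d_{\mathcal{W}_2}(\mu_{\alpha,a},\hat\nu)^2$ equals the second-moment and barycentric terms plus $2\min_{\hat\nu(S)=\alpha}\int_S\langle z,a\rangle\,d\hat\nu(z)$, a lower partial-mean of the projected distribution. Letting $\alpha$ range over $[0,1)$ recovers the quantile function, hence the law, of $\langle\cdot,a\rangle$ under $\hat\nu$; letting $a$ range over all directions and invoking Cram\'er--Wold recovers $\hat\nu$ itself. Since $\hat\nu$ and $K\hat\nu$ give identical distances to every $\mu_{\alpha,a}$, this forces $K\hat\nu=\hat\nu$, so $K=\mathrm{id}$. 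Comparing the same family against $\alpha\delta_q+(1-\alpha)\hat\mu$ recovers the projected quantiles of $\hat\mu$ and gives $\Psi_\alpha=\mathrm{id}$ for every $\alpha$, whence $\Phi=\mathrm{id}$ and $\Phi$ is the push-forward of an element of $O(2)$.

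The main obstacle is the optimal-transport analysis in the third step: I must prove rigorously, via Kantorovich duality, that the two-source transport has the claimed threshold structure, so that the distance genuinely reads off a one-sided quantile functional rather than merely a moment; this is the precise point at which the elevation of $q$ (the uniform $+1$ in $d_2(q,\cdot)^2$ paired with the quadratic planar term) breaks the moment-only rigidity that the exotic planar isometries rely on. I must also verify the auxiliary optimality facts about couplings that leave the mass at $q$ in place, used to identify $\Phi|_{\mathrm{planar}}$ as a bona fide isometry and to reduce to the maps $\Psi_\alpha$. A secondary, more routine difficulty is the reconstruction step, where the one-dimensional partial-mean data must be converted into the projected laws and fed into Cram\'er--Wold, with care for atoms and ties in the quantile assignment; and the very first step, preservation of Dirac masses, though following known patterns, still requires a metric characterization adapted to the presence of $q$.
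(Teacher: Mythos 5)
Your proposal is correct in outline, but it follows a genuinely different route from the paper's. The paper never characterizes Dirac masses metrically: it characterizes the slices $S_t=\{\mu:\mu(\{q\})=1-t\}$ by the existence of geodesics to $\nu_t=(1-t)\delta_q+t\delta_0$ (no geodesic joins measures in different slices, because moving mass to or from $q$ costs at least $1$ per unit, which makes the mass function $2$-H\"older along any constant-speed geodesic, hence constant), deduces that $\Phi$ fixes $\delta_q$, each $\nu_t$ and each slice, introduces the projection $P_\#$ onto $S_1\cong\mathcal{W}_2(\R^2,d_E)$ and shows it commutes with $\Phi$, invokes Kloeckner's classification of $\isom\ler{\mathcal{W}_2(\R^2,d_E)}$ for $\Phi_{|S_1}$, and kills the exotic rotation/reflection with the single test measure $\tfrac{1}{3}\delta_q+\tfrac{2}{3}\delta_{(1,0)}$, whose $P_\#$-image must keep an atom of mass $\geq \tfrac{1}{3}$ at the origin; injectivity of $P_\#$ on each slice then finishes. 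You instead recover the invariants (mass at $q$, planar barycenter, planar second moment) from distances to Dirac masses, and then reconstruct every measure from its distances to the two-atom family $\alpha\delta_q+(1-\alpha)\delta_a$ via lower partial means of the projections $\langle\cdot,a\rangle$ and Cram\'er--Wold. Your route buys independence from Kloeckner's classification (the reconstruction shows directly that any distance-preserving, Dirac-fixing map is the identity), at the price of needing a metric characterization of Dirac masses valid in the non-geodesic space $\R^2\cup\{q\}$ --- the one ingredient you leave unproved, though the standard characterizations used in this literature (Geh\'er--Titkos--Virosztek) hold for arbitrary Polish base spaces, so this is fillable. Two reassurances on your stated obstacles: the threshold structure needs no Kantorovich duality, since a coupling with a two-atom source is just a decomposition $\hat\nu=\sigma_q+\sigma_a$ with $\sigma_q(\R^2)=\alpha$, giving cost $\int|z-a|^2\,\dd\hat\nu+\alpha(1-|a|^2)+2\int\langle z,a\rangle\,\dd\sigma_q$, minimized by placing the sub-measure $\sigma_q\leq\hat\nu$ on the lower tail of $\langle\cdot,a\rangle$ (minimizing over sub-measures rather than sets handles atoms and ties); and your final step for $\mu=\alpha_0\delta_q+(1-\alpha_0)\hat\mu$ with $\alpha_0>0$ does work, but only the anchors with $\alpha>\alpha_0$ carry information (for $\alpha\leq\alpha_0$ the optimal plan leaves all of $q$'s mass in place and the distance sees only moments), which still suffices to recover every quantile level of $\hat\mu$.
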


\section{The unit interval plus one point far away}
The proof of the isometric flexibility of the Wasserstein space $\mathcal{W}_1([0,1], |\cdot|)$ was given in \cite{geher_isometric_2020} by Geh\'er, Titkos and Virosztek. They showed that, besides the trivial isometries $\mathrm{id}$ and $r_{\#}$, where $r:[0,1] \to [0,1]$ is the reflection given by $r(x)=1-x$, this Wasserstein space admits a so called \textit{flip} isometry denoted by $J$ (as well as the composition with the reflection $r_{\#}J=Jr_{\#}$). To define this flip isometry $J$, the authors used the cumulative distribution function of a measure, $F_{\mu}: [0,1] \to [0,1]$, defined as 
\begin{equation}
F_{\mu}(x)=\mu([0,x]).
\end{equation}
Then the flip isometry is defined by $J(\mu)=\nu$ such that $F_{\nu}=F^{-1}_{\mu}$, where $F^{-1}_{\mu}$ is the generalized inverse of $F_{\mu}$, as described in \cite[Subsec. 2.2.1]{villani_optimal_2009}. This flip isometry does not send Dirac masses to Dirac masses; for example, $J(\delta_{1/2})=\frac{1}{2}\delta_0 + \frac{1}{2}\delta_1$. 

The distance we will consider in this section assumes that the point $q$ is located at a constant distance $D$ from all points of the interval:
\begin{align} \label{eq:unif-dist-def}
    d_1(x,y)=\abs{x-y} \text{ for all } x,y \in[0,1], \text{ and } d_1(x,q)=D \gg 1 \text{ for all } x \in [0,1].
\end{align}
The aim of this section is to prove Theorem \ref{thm_1} where the distance is defined by \eqref{eq:unif-dist-def}. Before we give the proof, we will need a series of preparatory Lemmata. Our first step is the following characterization of the measure $\delta_q$. 
\begin{lemma}   
\label{lem:charac_1}
For a measure $\mu \in \mathcal{W}_1([0,1]\cup\{q\}, d_1),$ the following are equivalent:
\begin{enumerate}
	\item \label{item:mu-is-delta-q_case1} $\mu=\delta_q,$
    \item \label{item:big-triangle} there exist measures $\nu_1, \nu_2 \in \mathcal{W}_1([0,1]\cup\{q\}, d_1)$ such that $d_{\mathcal{W}_1}(\mu,\nu_i)=D$ for $i=1,2$ and $d_{\mathcal{W}_1}(\nu_1,\nu_2)=1.$
\end{enumerate}
\end{lemma}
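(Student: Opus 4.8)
The plan is to prove both implications of the equivalence, with the bulk of the work being the harder direction $(\ref{item:big-triangle}) \Rightarrow (\ref{item:mu-is-delta-q_case1})$.

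For the easy direction $(\ref{item:mu-is-delta-q_case1}) \Rightarrow (\ref{item:big-triangle})$, I would simply exhibit the two witnessing measures. Taking $\mu = \delta_q$, I would set $\nu_1 = \delta_0$ and $\nu_2 = \delta_1$. Since every point of $[0,1]$ sits at distance exactly $D$ from $q$, the only coupling between a Dirac mass at a point of the interval and $\delta_q$ gives $d_{\mathcal{W}_1}(\delta_q, \delta_0) = d_{\mathcal{W}_1}(\delta_q, \delta_1) = D$, while $d_{\mathcal{W}_1}(\delta_0, \delta_1) = |0 - 1| = 1$. This verifies the second condition immediately.

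For the hard direction, suppose $\mu$ satisfies $(\ref{item:big-triangle})$ with witnesses $\nu_1, \nu_2$. The key quantitative input is that $D \gg 1$: since the interval has diameter $1$, any mass that $\mu$ places on $[0,1]$ can be transported to any target within the interval at cost at most $1$, whereas reaching $q$ (or leaving $q$) costs exactly $D$. Writing $\mu = \mu|_{[0,1]} + \mu(\{q\})\delta_q$ and decomposing $\nu_i$ similarly, I would estimate $d_{\mathcal{W}_1}(\mu, \nu_i)$ from below and above in terms of the masses $m := \mu([0,1])$ and $m_i := \nu_i([0,1])$. The guiding heuristic is that the Wasserstein distance essentially decouples into a contribution of order $1$ coming from rearrangements within $[0,1]$, plus a contribution of size $D$ times the amount of mass that must cross between the interval and the isolated point $q$. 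Making this precise, I expect a lower bound of the form $d_{\mathcal{W}_1}(\mu, \nu_i) \geq D \cdot |m - m_i| $ together with an upper bound controlling the within-interval cost by something of order $1$. Requiring $d_{\mathcal{W}_1}(\mu, \nu_i) = D$ for both $i$ while $d_{\mathcal{W}_1}(\nu_1, \nu_2) = 1$ then forces the mass-transfer terms to saturate: the only way to achieve distance exactly $D$ when $D$ is large is to move all of $\mu$'s interval-mass to $q$ or vice versa, which pins down $m \in \{0,1\}$ and the masses of the $\nu_i$.

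The main obstacle, and the step I would treat most carefully, is ruling out the configuration where $\mu$ is supported entirely on $[0,1]$ rather than being $\delta_q$. If $m = 1$, then both $\nu_1$ and $\nu_2$ must place nearly all their mass at $q$ to achieve distance $D$ from $\mu$; but then $d_{\mathcal{W}_1}(\nu_1, \nu_2)$ would be small (both being close to $\delta_q$), contradicting $d_{\mathcal{W}_1}(\nu_1, \nu_2) = 1$. Conversely, if $m = 0$ then $\mu = \delta_q$ as desired. Thus the constraint $d_{\mathcal{W}_1}(\nu_1, \nu_2) = 1$ is exactly what breaks the symmetry between $q$ and the interval and selects $\delta_q$. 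I would organize this as a case analysis on the value of $m$, using the triangle inequality $d_{\mathcal{W}_1}(\nu_1, \nu_2) \leq d_{\mathcal{W}_1}(\nu_1, \mu) + d_{\mathcal{W}_1}(\mu, \nu_2)$ together with the sharp mass-crossing lower bounds to eliminate every case except $\mu = \delta_q$; the precise bookkeeping of how the remaining interval masses contribute to $d_{\mathcal{W}_1}(\nu_1, \nu_2)$ is where the $D \gg 1$ hypothesis must be invoked explicitly.
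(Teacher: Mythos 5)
Your overall strategy coincides with the paper's: both hinge on showing that $d_{\mathcal{W}_1}(\mu,\nu_i)=D$ forces one of the two measures to be exactly $\delta_q$ and the other to be supported in $[0,1]$, after which the condition $d_{\mathcal{W}_1}(\nu_1,\nu_2)=1$ rules out the case $\supp(\mu)\subseteq[0,1]$ (since then $\nu_1=\nu_2=\delta_q$). The paper asserts this dichotomy without computation, and your mass-decomposition bounds are the right way to justify it. One step needs more care than your phrasing suggests: writing $m=\mu([0,1])$ and $m_i=\nu_i([0,1])$, the correct two-sided estimate is
\[
D\,\abs{m-m_i}\;\le\;d_{\mathcal{W}_1}(\mu,\nu_i)\;\le\;D\,\abs{m-m_i}+\min(m,m_i),
\]
with the within-interval cost bounded by the \emph{transported interval mass} $\min(m,m_i)$, not merely by ``something of order $1$''. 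With the weaker bound $D\abs{m-m_i}+1$ you only get $\abs{m-m_i}\ge 1-1/D$, which leaves room for interval masses up to $1/D$; and a measure with interval mass $1/D$ is \emph{not} Wasserstein-close to $\delta_q$ --- it sits at distance exactly $1$ from it --- so the step ``both $\nu_i$ are close to $\delta_q$, hence $d_{\mathcal{W}_1}(\nu_1,\nu_2)$ is small'' would fail as literally stated. With the sharp bound, the requirement $D\le D\abs{m-m_i}+\min(m,m_i)$ forces exactly $\{m,m_i\}=\{0,1\}$: for instance, if $m_i\ge m$ it reads $m_i\ge 1+m(1-1/D)$, hence $m=0$ and $m_i=1$. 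Consequently, in the case $m=1$ you get $\nu_1=\nu_2=\delta_q$ exactly and $d_{\mathcal{W}_1}(\nu_1,\nu_2)=0\neq 1$, and in the case $m=0$ you get $\mu=\delta_q$. Modulo this sharpening, your argument is correct and amounts to a fleshed-out version of the paper's proof.
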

\begin{proof}
The direction \eqref{item:mu-is-delta-q_case1} $\Rightarrow$ \eqref{item:big-triangle} is easy to see by taking $\nu_1:=\delta_0$ and $\nu_2:= \delta_1.$ The converse direction \eqref{item:big-triangle} $\Rightarrow$ \eqref{item:mu-is-delta-q_case1} follows from the fact that if $d_{\mathcal{W}_1}(\mu,\nu_1)=D,$ then either $\supp(\mu)=\{q\}$ and $\supp(\nu_1) \subseteq [0,1]$ or the other way around, but if $\supp(\mu) \subseteq [0,1],$ then $\supp(\nu_i)=\{q\}$ for $i=1,2$ which contradicts $d_{\mathcal{W}_1}(\nu_1,\nu_2)=1.$ So $\supp(\mu)=\{q\},$ which means that $\mu=\delta_q.$ 
\end{proof}    

From this metric characterization, we can get the important corollary which shows that any isometry of the Wasserstein space leaves the measure $\delta_q$ invariant.
\begin{corollary}
Assume that $\Phi: \mathcal{W}_1([0,1] \cup \{q\}, d_1) \to \mathcal{W}_1([0,1] \cup \{q\}, d_1)$ is an isometry. Then $\Phi(\delta_q)=\delta_q$.  
\end{corollary}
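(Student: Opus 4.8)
The plan is to use the metric characterization of $\delta_q$ established in Lemma~\ref{lem:charac_1} together with the fact that isometries of the Wasserstein space preserve Wasserstein distances. The key observation is that property \eqref{item:big-triangle} of Lemma~\ref{lem:charac_1} is expressed purely in terms of distances in $\mathcal{W}_1([0,1]\cup\{q\}, d_1)$, and is therefore automatically preserved under any distance-preserving bijection.

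The argument proceeds as follows. Since $\Phi$ is an isometry, by the forward direction \eqref{item:mu-is-delta-q_case1} $\Rightarrow$ \eqref{item:big-triangle} of Lemma~\ref{lem:charac_1} applied to $\mu = \delta_q$, there exist $\nu_1, \nu_2$ (for instance $\delta_0$ and $\delta_1$) with $d_{\mathcal{W}_1}(\delta_q,\nu_i)=D$ and $d_{\mathcal{W}_1}(\nu_1,\nu_2)=1$. Applying $\Phi$ and using that it preserves distances, I obtain
\[
d_{\mathcal{W}_1}\ler{\Phi(\delta_q),\Phi(\nu_i)}=D \quad (i=1,2), \qquad d_{\mathcal{W}_1}\ler{\Phi(\nu_1),\Phi(\nu_2)}=1.
\]
Thus the measure $\Phi(\delta_q)$, together with the witnesses $\Phi(\nu_1)$ and $\Phi(\nu_2)$, satisfies exactly property \eqref{item:big-triangle} of Lemma~\ref{lem:charac_1}. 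By the reverse implication \eqref{item:big-triangle} $\Rightarrow$ \eqref{item:mu-is-delta-q_case1}, this forces $\Phi(\delta_q)=\delta_q$, which is the claim.

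There is essentially no obstacle here: the entire content has been front-loaded into Lemma~\ref{lem:charac_1}, and the corollary is a one-line consequence of the principle that a purely metric characterization of a point is stable under isometries. The only point requiring (trivial) care is to observe that $\Phi$ is a bijection, so that the images $\Phi(\nu_1),\Phi(\nu_2)$ are again legitimate elements of the Wasserstein space serving as the required witnesses $\nu_1,\nu_2$ in the characterization; this is immediate from the definition of isometry as a distance-preserving surjection recalled in the introduction.
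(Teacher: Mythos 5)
Your proposal is correct and follows exactly the paper's own argument: produce witnesses $\nu_1,\nu_2$ via the forward implication of Lemma~\ref{lem:charac_1}, push them through $\Phi$ using distance preservation, and conclude by the reverse implication. No differences worth noting.
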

\begin{proof}
By Lemma \ref{lem:charac_1}, there exist $\nu_1, \nu_2$ such that $d_{\mathcal{W}_1}(\delta_q,\nu_i)=D$ for $i=1,2$ and $d_{\mathcal{W}_1}(\nu_1,\nu_2)=1,$ and hence $d_{\mathcal{W}_1}(\Phi(\delta_q),\Phi(\nu_i))=D$ for $i=1,2$ and $d_{\mathcal{W}_1}(\Phi(\nu_1),\Phi(\nu_2))=1$. Lemma \ref{lem:charac_1} then implies that $\Phi(\delta_q)=\delta_q.$
\end{proof}

In the next step, we separate our Wasserstein space into slices that share the same weight on the point $q$. That is, we define for $t \in [0,1]$ the $t$-slice $S_t$ as
\begin{equation}
\label{eq:t-slice-def}
S_t=\{\mu \in \mathcal{W}_1([0,1] \cup \{q\}, d_1) | \mu(\{q\})=1-t \}.
\end{equation}

We can now show that any isometry globally preserves $t$-slices $S_t$.
\begin{lemma}
\label{lem:Slices_preserved_case1}
For $t \in [0,1]$ and $\Phi: \mathcal{W}_1([0,1] \cup \{q\}, d_1) \to \mathcal{W}_1([0,1] \cup \{q\}, d_1)$ an isometry, we have $\Phi(S_t) = S_t$.
\end{lemma}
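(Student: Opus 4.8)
The plan is to recover the weight $\mu(\{q\})$ from a purely metric quantity, namely the Wasserstein distance to $\delta_q$, and then to invoke the fact established in the Corollary above that every isometry fixes $\delta_q$.

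First I would compute $d_{\mathcal{W}_1}(\mu, \delta_q)$ for an arbitrary $\mu \in \mathcal{W}_1([0,1] \cup \{q\}, d_1)$. Since $\delta_q$ is concentrated at the single point $q$, there is only one coupling between $\mu$ and $\delta_q$: all the mass of $\mu$ must be transported to $q$. The transport cost is therefore
$$
d_{\mathcal{W}_1}(\mu, \delta_q) = \int_{[0,1] \cup \{q\}} d_1(x, q) \, d\mu(x) = D \cdot \mu([0,1]),
$$
because $d_1(x,q) = D$ for every $x \in [0,1]$ while $d_1(q,q) = 0$. In particular, for $\mu \in S_t$ one has $\mu([0,1]) = t$, so $d_{\mathcal{W}_1}(\mu, \delta_q) = tD$. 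This shows that the slice to which $\mu$ belongs is encoded in its distance to $\delta_q$ via the relation $\mu(\{q\}) = 1 - d_{\mathcal{W}_1}(\mu,\delta_q)/D$.

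Next, using that $\Phi$ is an isometry and that $\Phi(\delta_q) = \delta_q$, for any $\mu \in S_t$ I obtain
$$
d_{\mathcal{W}_1}(\Phi(\mu), \delta_q) = d_{\mathcal{W}_1}(\Phi(\mu), \Phi(\delta_q)) = d_{\mathcal{W}_1}(\mu, \delta_q) = tD.
$$
Applying the distance formula above to the measure $\Phi(\mu)$, this forces $\Phi(\mu)([0,1]) = t$, that is $\Phi(\mu)(\{q\}) = 1-t$, and hence $\Phi(\mu) \in S_t$. Therefore $\Phi(S_t) \subseteq S_t$.

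Finally, the same reasoning applied to $\Phi^{-1}$, which is again an isometry fixing $\delta_q$, yields $\Phi^{-1}(S_t) \subseteq S_t$, and combining the two inclusions gives $\Phi(S_t) = S_t$, as claimed. The only nontrivial ingredient is the distance computation, and even this presents no genuine obstacle here: because the reference measure $\delta_q$ is a Dirac mass, it admits a unique coupling with any $\mu$, so the cost reduces immediately to the integral of $d_1(\cdot, q)$ against $\mu$.
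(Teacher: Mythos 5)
Your proposal is correct and follows essentially the same route as the paper: recover $\mu([0,1])$ as $\tfrac{1}{D}d_{\mathcal{W}_1}(\mu,\delta_q)$ via the unique coupling with the Dirac mass, use $\Phi(\delta_q)=\delta_q$ to get $\Phi(S_t)\subseteq S_t$, and then upgrade to equality by applying the same inclusion to $\Phi^{-1}$. No gaps.
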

\begin{proof} We will in fact only prove that $\Phi(S_t) \subseteq S_t$. Indeed, since $\Phi^{-1}$ is also an isometry, the surjectivity of $\Phi$ and the inclusion chain $S_t=\Phi(\Phi^{-1}(S_t)) \subseteq \Phi(S_t) \subseteq S_t$ shows that $\Phi(S_t) \subseteq S_t$ for all isometries $\Phi$ implies $\Phi(S_t)=S_t$.

To check the inclusion $\Phi(S_t) \subseteq S_t$, we observe that
\begin{align} \label{eq:mass-in-terms-of-1-Wass-distance}
\mu([0,1])=\frac{1}{D} d_{\mathcal{W}_1}(\delta_q, \mu) \text{ for all } \mu \in \mathcal{W}_1([0,1]\cup\{q\}, d_1). 
\end{align}
Indeed, 
\begin{align}
        d_{\mathcal{W}_1}(\delta_q, \mu)=\int_{[0,1] \cup \{q\}} d_1(q,y) \dd \mu(y)&=\int_{[0,1]}d_1(q,y) \dd \mu(y)+\int_{\{q\}} d_1(q,y) \dd \mu(y) \nonumber \\
        &=\int_{[0,1]}D \dd \mu (y) +0=D \mu([0,1]).
    \end{align}
    Consequently, 
    \begin{align}
        \Phi(\mu)([0,1])=\frac{1}{D} d_{\mathcal{W}_1}(\delta_q,\Phi(\mu))
        =\frac{1}{D} d_{\mathcal{W}_1}(\Phi(\delta_q),\Phi(\mu)) \nonumber \\
        =\frac{1}{D} d_{\mathcal{W}_1}(\delta_q,\mu)=\mu([0,1])
    \end{align}
    for every $\mu$ and every Wasserstein isometry $\Phi,$ which implies that $\Phi(S_t) \subseteq S_t$ for all $t \in [0,1].$
\end{proof}

As a result of this Lemma, we can consider the restrictions $\Phi_{|S_t}$ to the measures of $S_t$, which will be an isometry from a $t$-slice to itself. As we will see in the next Lemma, up to a rescaling factor, each slice $S_t$ is isometric to the Wasserstein space $\mathcal{W}_1([0,1], |\cdot|)$, and thus $\Phi_{|S_t}$ acts like one of the four possible isometries $\{\mathrm{id}, r_{\#}, J, r_{\#} J\}.$ To prove Theorem \ref{thm_1} we will then rule out the cases $\Phi_{|S_t}=J$ or $\Phi_{|S_t}=r_{\#}J$. 

\begin{lemma}
For $t \in (0,1]$, the $t$-slice $S_t$ is isometric to the rescaled Wasserstein spaces $\mathcal{W}_1([0,1], t|\cdot |)$. 
\end{lemma}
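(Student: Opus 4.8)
The plan is to exhibit an explicit bijection between the slice $S_t$ and the space $\mathcal{W}_1([0,1],t|\cdot|)$ and to verify it is distance-preserving. A measure $\mu \in S_t$ decomposes uniquely as $\mu = (1-t)\delta_q + t\,\hat\mu$, where $\hat\mu$ is a Borel probability measure supported on $[0,1]$; this follows since $\mu(\{q\}) = 1-t$ forces the remaining mass $t$ to live on $[0,1]$, and we normalize it. I would therefore define the candidate isometry $\Lambda_t \colon S_t \to \mathcal{W}_1([0,1],|\cdot|)$ by $\Lambda_t(\mu) = \hat\mu$, which is manifestly a bijection with inverse $\hat\mu \mapsto (1-t)\delta_q + t\hat\mu$. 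The claim is that $\Lambda_t$ is an isometry once the target carries the rescaled metric $t|\cdot|$, i.e. once we rescale the $1$-Wasserstein distance by the factor $t$.

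The key computation is to relate $d_{\mathcal{W}_1}(\mu,\nu)$ for $\mu,\nu \in S_t$ to $d_{\mathcal{W}_1}(\hat\mu,\hat\nu)$. Write $\mu = (1-t)\delta_q + t\hat\mu$ and $\nu = (1-t)\delta_q + t\hat\nu$. The first step is to argue that an optimal coupling $\pi$ between $\mu$ and $\nu$ never transports mass between $q$ and the interval: since both measures have exactly mass $1-t$ at $q$ and $q$ sits at the uniform distance $D \gg 1$ from every interval point, any cross-transport would incur a cost of at least $D$ per unit of transported mass, whereas keeping the $q$-masses matched (cost $0$) and coupling the interval parts separately costs at most the diameter $1$ per unit. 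Hence the optimal $\pi$ splits as $(1-t)(\delta_q \times \delta_q)$ plus a coupling of $t\hat\mu$ with $t\hat\nu$. This gives
\begin{equation}
d_{\mathcal{W}_1}(\mu,\nu) = t\, d_{\mathcal{W}_1}(\hat\mu,\hat\nu),
\end{equation}
which is exactly the statement that $\Lambda_t$ intertwines the distance on $S_t$ with $t$ times the distance on $\mathcal{W}_1([0,1],|\cdot|)$, and the latter is the $1$-Wasserstein distance for the rescaled ground metric $t|\cdot|$.

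The main obstacle I anticipate is making the no-cross-transport argument fully rigorous at the level of couplings rather than just comparing candidate values. The clean way is a direct comparison: first verify the upper bound $d_{\mathcal{W}_1}(\mu,\nu) \le t\, d_{\mathcal{W}_1}(\hat\mu,\hat\nu)$ by exhibiting the diagonal-at-$q$ coupling above, which requires no optimality reasoning. For the matching lower bound, one takes any coupling $\pi \in \Pi(\mu,\nu)$ and argues that replacing it by one that matches the $q$-masses does not increase the cost; formally one shows that because $D \gg 1$ exceeds the interval diameter, the amount of mass $\pi$ sends from $q$ into $[0,1]$ equals the amount it sends from $[0,1]$ into $q$ (both measures put mass $1-t$ on $q$), and re-routing this mass to stay at $q$ while rearranging the freed interval mass strictly lowers or preserves the total cost. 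After discarding the $q$-part, what remains is a (sub)coupling of $t\hat\mu$ and $t\hat\nu$ whose cost is at least $t\, d_{\mathcal{W}_1}(\hat\mu,\hat\nu)$. Combining the two bounds yields equality, and hence $\Lambda_t$ is the desired isometry; the restriction $t \in (0,1]$ is needed so that $\hat\mu$ is well defined by normalization.
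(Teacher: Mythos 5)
Your proof is correct, but it takes a genuinely different route from the paper. The paper proves the scaling identity $d_{\mathcal{W}_1}(\mu,\nu)=t\,d_{\mathcal{W}_1}(\mu',\nu')$ in one line via Kantorovich--Rubinstein duality: writing both measures as $(1-t)\delta_q+t\,(\cdot)$, the common $\delta_q$ term cancels inside $\int f\,\dd\nu-\int f\,\dd\mu$, the factor $t$ pulls out, and the supremum over $1$-Lipschitz functions on $([0,1]\cup\{q\},d_1)$ coincides with that over $1$-Lipschitz functions on $[0,1]$ (restriction preserves the Lipschitz bound, and any $1$-Lipschitz $f$ on $[0,1]$ extends to $q$ since its oscillation is at most $1\le D$ -- a point the paper leaves implicit). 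You instead argue on the primal side: the diagonal-at-$q$ coupling gives the upper bound, and for the lower bound you re-route any cross-transport between $q$ and the interval, trading a cost of $2aD$ for a cost of at most $a$, which is a genuine saving since $D\gg 1$; the mass-balance observation that $\pi(\{q\}\times[0,1])=\pi([0,1]\times\{q\})$ because both measures assign mass $1-t$ to $q$ is exactly what makes the re-routing produce a valid coupling of $t\hat\mu$ with $t\hat\nu$. Your approach is more elementary (no duality theorem needed) at the price of the two-sided estimate and the surgery on couplings; the paper's dual argument is shorter but hides the Lipschitz-extension step. Both are complete, and both correctly use that for $p=1$ rescaling the ground metric by $t$ rescales the Wasserstein distance by $t$.
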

\begin{proof}
The isometric relationship between the two metric spaces is given by the following map: for $\mu=(1-t) \delta_q + t \mu'$, we define the isometry $\iota(\mu)=\mu'$. It is clear that this map is surjective, we thus only need to show that $d_{\mathcal{W}_1}(\mu, \nu)=td_{\mathcal{W}_1}(\mu', \nu')$ for any $\mu, \nu \in S_t$. 

We recall that the Kantorovich-Rubinstein duality says that for a complete separable metric space $(X,d)$ and for two measures $\mu, \nu \in \mathcal{W}_1(X, d)$
$$
d_{\mathcal{W}_1}(\mu,\nu) =\sup\lers{\int_{X} f \dd \nu - \int_X f \dd \mu \, \middle| \, f: X \to \R, \, \norm{f}_{Lip} \leq 1 };
$$ see for example \cite[Prop. 2.6.6]{figalli_invitation_2021}. 
Let $t \in (0,1],$ and let $\mu, \nu \in S_t$ with the decomposition 
\begin{align*}
        \mu=(1-t) \delta_q  + t \mu' \text{ and } \nu=(1-t) \delta_q+t \nu'
\end{align*} for $\mu', \nu' \in \mathcal{W}_1([0,1],|\cdot |)$. 
Then 
\begin{align} \label{eq:scaling-of-the-1-Wasserstein-distance}
d_{\mathcal{W}_1}(\mu,\nu)&=\sup\lers{\int_{[0,1] \cup\{q\}} f \dd \nu-\int_{[0,1] \cup\{q\}} f \dd \mu \, \middle| \, f: [0,1] \cup\{q\} \to \R, \, \norm{f}_{Lip} \leq 1 } \nonumber \\
&=\sup\lers{ \int_{[0,1] \cup\{q\}} f \dd (t \nu' +(1-t) \delta_q)-\int_{[0,1] \cup\{q\}} f \dd (t \mu' +(1-t) \delta_q)\, \middle| \, \norm{f}_{Lip} \leq 1} \nonumber \\
&=t \sup\lers{ \int_{[0,1] \cup\{q\}} f \dd \nu'- \int_{[0,1] \cup\{q\}}f \dd \mu'\, \middle| \, \norm{f}_{Lip} \leq 1}= t d_{\mathcal{W}_1}(\mu',\nu').
\end{align}
\end{proof}

Let $\mu$ and $\nu \in S_t$, for $t \in (0,1]$.
Since by Lemma \ref{lem:Slices_preserved_case1} we have $\Phi(\mu), \Phi(\nu) \in S_t$, we can write $\Phi(\mu)=(1-t)\delta_q+t\Phi(\mu)'$ and $\Phi(\nu)=(1-t)\delta_q+t\Phi(\nu)'$ (with $\Phi(\mu)', \Phi(\nu)' \in \mathcal{W}_1([0,1], |\cdot|)).$ By \eqref{eq:scaling-of-the-1-Wasserstein-distance} and the isometric property of $\Phi,$ we get
    \begin{align}
        d_{\mathcal{W}_1}(\Phi(\mu)',\Phi(\nu)')=\frac{1}{t}d_{\mathcal{W}_1}(\Phi(\mu),\Phi(\nu))=\frac{1}{t}d_{\mathcal{W}_1}(\mu,\nu)=d_{\mathcal{W}_1}(\mu', \nu').
    \end{align}
    This implies that for every $t \in (0,1]$ there is a $\varphi_t \in \isom\ler{\mathcal{W}_1([0,1], d_1)}=\{\mathrm{id}, r_{\#}, J, r_{\#} J\}$ such that the restriction $\Phi_{|S_t}$ of $\Phi$ to $S_t$ is given by
    \begin{align} \label{eq:action-of-Phi-on-slices}
        \Phi((1-t) \delta_q+t\mu')=(1-t) \delta_q +t \varphi_t(\mu') \text{ for all } \mu' \in \mathcal{W}_1([0,1],|\cdot|).
\end{align}

With this observation, we can now finish the proof of Theorem \ref{thm_1}. 

\begin{proof}[Proof of Thm \ref{thm_1}]
Since the isometries of $[0,1] \cup \{q\}$ are given by the set $\{\mathrm{id}, r\}$ (where $r$ is extended to $([0,1] \cup \{q\}, d_1)$ by setting $r(q)=q$),
 we want to show that either $\varphi_t = \mathrm{id}$ or $\varphi_t=r_{\#}$ for any $t \in [0,1]$. We first notice that since $S_0=\{\delta_q\}$, this holds trivially for $t=0$. 

For $t \in (0,1]$, assume by contradiction that $\varphi_t \in \{J, r_{\#}J \}$, and consider the measures $\mu \in S_t$ and $\nu \in S_{2t/3}$ given by 
$$
\mu= (1-t)\delta_q + \frac{t}{2}(\delta_0 + \delta_1) \quad \text{and} \quad \nu=\left(1-\frac{2t}{3}\right)\delta_q + \frac{2t}{3} \delta_0. 
$$
The optimal transport plan from $\mu$ to $\nu$ is to send a mass of magnitude $t/6$ from $1$ to $0,$ and a mass of magnitude $t/3$ from $1$ to $q.$ Therefore, $d_{\mathcal{W}_1}(\mu,\nu)=t(D/3+1/6).$ 

By \eqref{eq:action-of-Phi-on-slices} and our assumption on $\varphi_t$, $\Phi(\mu)=(1-t) \delta_q + t\delta_{1/2}.$ Furthermore, \eqref{eq:action-of-Phi-on-slices} shows that $\Phi(\nu)=(1-\frac{2t}{3})\delta_q + \frac{2t}{3} \delta_1$ (if $\varphi_{2t/3} \in \{r_{\#},J\}$) or $\Phi(\nu)=(1-\frac{2t}{3})\delta_q + \frac{2t}{3} \delta_0$ (if $\varphi_{2t/3} \in \{\mathrm{id},r_{\#}J\}$).
In any case, 
    \begin{align}
    d_{\mathcal{W}_1}(\Phi(\mu),\Phi(\nu))=d_{\mathcal{W}_1}((1-t) \delta_q+ \delta_{1/2},\Phi(\nu))=\frac{t}{3}D + \frac{2t}{3}\cdot \frac{1}{2}=t(D/3+1/3),    
    \end{align}
which means that $d_{\mathcal{W}_1}(\Phi(\mu),\Phi(\nu))>d_{\mathcal{W}_1}(\mu, \nu).$ This contradiction to the isometric property of $\Phi$ implies that $\varphi_t \in \{\mathrm{id}, r_{\#}\}$ for any $t \in (0,1]$.

It remains to show that $\varphi_t=\varphi_{t'}$ for any pair $t, t' \in (0,1]$. To see this, we calculate the following distances: if we consider the measures $\mu_t=(1-t) \delta_q + t\delta_0$ and $\nu_t=(1-t) \delta_q + t\delta_1$, we have 
$$
d_{\mathcal{W}_1}(\mu_t,\mu_{t'})=D|t-t'|
$$
and
$$
d_{\mathcal{W}_1}(\mu_t, \nu_{t'})=(D+1)|t-t'|.
$$ 
Thus, if there exists $t,t' \in (0,1]$ such that $\varphi_t=\mathrm{id}$ but $\varphi_{t'}=r_{\#}$, we have 
$d_{\mathcal{W}_1}(\mu_t, \mu_{t'})=D|t-t'|,$ but $d_{\mathcal{W}_1}(\Phi(\mu_t), \Phi(\mu_{t'}))=d_{\mathcal{W}_1}(\mu_t, \nu_{t'}) =(D+1)|t-t'|$, contradicting the distance preserving property of isometries. Thus either $\Phi=\mathrm{id}$ or $\Phi=r_{\#}$ on the whole Wasserstein space, proving the theorem. 
\end{proof}

\begin{remark}
We can use similar techniques to prove the rigidity of the Wasserstein space $\mathcal{W}_1([0,1] \cup \{q\}, d)$ for different metrics $d$, for example when $q$ is a point to the left of the interval, such that the distance between $q$ and $x \in [0,1]$ is given by $d(q, x)=x + \varepsilon$, for some $\varepsilon>0$. 
\end{remark}

\section{The $2$-Wasserstein space over the Euclidean plane plus one point}

In \cite{kloeckner_geometric_2010}, Kloeckner showed that the Wasserstein space $\mathcal{W}_2(\Rn, d_E)$ admits flexible isometries for $n \geq 2$. In fact, he gave an explicit description of the possible isometries:  for a linear isometry $\varphi$ of $(\Rn, d_E)$, the map 
$$\Phi_{\varphi}(\mu)=\varphi_{\#}(\mu-g)+g$$
 is an isometry of the Wasserstein space $\mathcal{W}_2(\Rn, d_E)$, where $g$ denotes both the center of mass of the measure $\mu$ as well as the corresponding translation. He also showed that any isometry of the Wasserstein space is a composition of a trivial isometry and an isometry of the form $\Phi_{\varphi}$. 
 
In the following, we will focus on the case $n=2$.  
 
Morally, the isometries $\Phi_{\varphi}$ can be described as following: given a linear isometry $\varphi$ of $\R^2$ (which is a rotation of a certain angle or a reflection with respect to an axis), the Wasserstein isometry $\Phi_{\varphi}$ rotates or reflects the measure $\mu$ around its own center of mass. We can observe that Dirac masses are preserved by these isometries. 

In this section we consider the distance $d_2$, defined by 
\begin{align}
\label{eq:euclid_3_distance}
    d_2(q,x)= \sqrt{1+\norm{x}^2} \text{ and } d_2(x,x')=\norm{x-x'} \text{ for all } x, x' \in \R^2.
\end{align}
This metric can be represented by horizontally embedding $(\R^2, d_E)$ into $(\R^3, d_E)$, setting $q=(0,0,1)$, and taking the $3$-dimensional Euclidean distance as its metric. 

In this section we prove Theorem \ref{thm_2} where the distance is defined by \eqref{eq:euclid_3_distance}. The proof is based on the following Lemmata, the first being the characterization of $t-$slices
\begin{equation}
\label{eq:t-slice-def2}
S_t=\{\mu \in \mathcal{W}_1(\R^2 \cup \{q\}, d_2) | \mu(\{q\})=1-t \}
\end{equation}
by geodesics in the Wasserstein space $\mathcal{W}_2(\R^2 \cup \{q\}, d_2)$.
\begin{lemma}
\label{lem:slice_geod}
Consider $t \in [0,1]$ and set $\nu_t= (1-t)\delta_{q} + t \delta_0$. Then, for any measure $\mu \in \mathcal{W}_2(\R^2 \cup \{q\}, d_2)$, the following are equivalent:
\begin{enumerate}
\item The measure $\mu$ is in the slice $S_t$.
\item There exists a geodesic between $\mu$ and $\nu_t$. 
\end{enumerate}
\end{lemma}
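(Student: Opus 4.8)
The plan is to prove the two implications separately, relying on two elementary tools. The first is the vertical projection $P\colon \R^2\cup\{q\}\to\R^2$ with $P|_{\R^2}=\mathrm{id}$ and $P(q)=0$, which is $1$-Lipschitz from $(\R^2\cup\{q\},d_2)$ to $(\R^2,d_E)$: for $x\in\R^2$ we have $\norm{P(q)-P(x)}=\norm{x}\le\sqrt{1+\norm{x}^2}=d_2(q,x)$, and on the plane $P$ is an isometry. Consequently the pushforward $P_\#$ is $1$-Lipschitz on the $2$-Wasserstein level, i.e. $d^{\R^2}_{\mathcal{W}_2}(P_\#\sigma,P_\#\tau)\le d_{\mathcal{W}_2}(\sigma,\tau)$. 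The second tool is the observation that moving mass on or off $q$ is expensive: since $d_2(q,x)^2=1+\norm{x}^2\ge 1$, every coupling of $\sigma,\tau$ must transport at least $\abs{\sigma(\{q\})-\tau(\{q\})}$ units between $q$ and the plane, so that
\[
d_{\mathcal{W}_2}(\sigma,\tau)^2\ge \abs{\sigma(\{q\})-\tau(\{q\})}.
\]

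For the implication $(1)\Rightarrow(2)$, assume $\mu\in S_t$ and (the case $t=0$ being trivial, as $S_0=\{\delta_q\}=\{\nu_0\}$) write $\mu=(1-t)\delta_q+t\mu'$ with $\mu'\in\mathcal{W}_2(\R^2,d_E)$, which is possible precisely because $\mu(\{q\})=1-t$. Let $(\mu'_s)_{s\in[0,1]}$ be McCann's displacement interpolation in $\mathcal{W}_2(\R^2,d_E)$ from $\mu'_0=\mu'$ to $\mu'_1=\delta_0$, and set $\mu_s=(1-t)\delta_q+t\mu'_s$, so $\mu_0=\mu$ and $\mu_1=\nu_t$. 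The coupling that keeps the mass at $q$ fixed yields the upper bound $d_{\mathcal{W}_2}(\mu_s,\mu_{s'})\le \sqrt{t}\,\abs{s-s'}\,d^{\R^2}_{\mathcal{W}_2}(\mu',\delta_0)=:\abs{s-s'}L$. For the endpoint distance I use the projection: since $P_\#\nu_t=(1-t)\delta_0+t\delta_0=\delta_0$ and $P_\#\mu=(1-t)\delta_0+t\mu'$, the $1$-Lipschitz property of $P_\#$ gives $d_{\mathcal{W}_2}(\mu,\nu_t)\ge d^{\R^2}_{\mathcal{W}_2}((1-t)\delta_0+t\mu',\delta_0)=\sqrt{t}\,d^{\R^2}_{\mathcal{W}_2}(\mu',\delta_0)=L$. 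Hence $d_{\mathcal{W}_2}(\mu_0,\mu_1)=L$, and the triangle-inequality squeeze $L\le d_{\mathcal{W}_2}(\mu_0,\mu_s)+d_{\mathcal{W}_2}(\mu_s,\mu_{s'})+d_{\mathcal{W}_2}(\mu_{s'},\mu_1)\le sL+(s'-s)L+(1-s')L=L$ forces all inequalities to be equalities, so $d_{\mathcal{W}_2}(\mu_s,\mu_{s'})=\abs{s-s'}L$ and $(\mu_s)$ is a constant-speed geodesic from $\mu$ to $\nu_t$.

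For the implication $(2)\Rightarrow(1)$, suppose $(\mu_s)$ is a geodesic from $\mu$ to $\nu_t$, and set $m(s)=\mu_s(\{q\})$ and $L=d_{\mathcal{W}_2}(\mu,\nu_t)$. Combining the geodesic identity $d_{\mathcal{W}_2}(\mu_s,\mu_{s'})=\abs{s-s'}L$ with the mass estimate above gives $\abs{m(s)-m(s')}\le L^2\abs{s-s'}^2$. Subdividing $[s,s']$ into $n$ equal subintervals and summing yields $\abs{m(s)-m(s')}\le L^2\abs{s-s'}^2/n\to 0$, so $m$ is constant. Therefore $\mu(\{q\})=m(0)=m(1)=\nu_t(\{q\})=1-t$, i.e. $\mu\in S_t$.

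The main obstacle, and the reason the two directions must be argued as above, is that the tempting shortcut is false: the identity $d_{\mathcal{W}_2}((1-t)\delta_q+t\alpha,(1-t)\delta_q+t\beta)=\sqrt{t}\,d^{\R^2}_{\mathcal{W}_2}(\alpha,\beta)$ does \emph{not} hold in general, because when $\alpha,\beta$ are supported far from the origin and $t$ is small it is strictly cheaper to route mass through $q$ than to transport it within the plane. This is exactly why the endpoint distance in $(1)\Rightarrow(2)$ is extracted from the projection $P$ (which succeeds precisely because the plane part of $\nu_t$ sits at $0$, the foot of $q$, so that $P_\#\nu_t$ collapses to $\delta_0$), rather than from a direct optimality claim for the fixed-$q$ coupling; and it is why $(2)\Rightarrow(1)$ is obtained through the forced constancy of the $q$-mass rather than through an explicit description of the geodesics. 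The only standard input to be invoked is the existence (and measurable selection) of the displacement interpolation in $\mathcal{W}_2(\R^2,d_E)$.
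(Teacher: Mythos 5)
Your proof is correct. The direction $(2)\Rightarrow(1)$ is essentially identical to the paper's: both use the estimate $\abs{\sigma(\{q\})-\tau(\{q\})}\le d_{\mathcal{W}_2}^2(\sigma,\tau)$ (mass moved on or off $q$ travels distance at least $1$) together with the constant-speed parametrization to conclude that $s\mapsto \mu_s(\{q\})$ is $2$-H\"older and hence constant; this is the Maas-style argument the paper cites. The direction $(1)\Rightarrow(2)$ reaches the same geodesic --- your curve $(1-t)\delta_q+t\mu_s'$ with $\mu_s'$ the displacement interpolation from $\mu'$ to $\delta_0$ coincides with the paper's $\gamma(s)=p_{s\#}\pi_0$ --- but certifies it differently. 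The paper proves optimality of the ``collapse everything to $0$, fix $q$'' plan $\pi_0$ by exhibiting explicit Kantorovich potentials and invoking the duality criterion, then uses the standard fact that displacement interpolation along an optimal plan is a geodesic. You instead avoid duality entirely: the fixed-$q$ coupling gives the upper bound $d_{\mathcal{W}_2}(\mu_s,\mu_{s'})\le\abs{s-s'}L$ with $L=\sqrt{t}\,d_{\mathcal{W}_2}^{\R^2}(\mu',\delta_0)$, the $1$-Lipschitz projection $P$ (with $P(q)=0$) gives the matching lower bound $d_{\mathcal{W}_2}(\mu,\nu_t)\ge d_{\mathcal{W}_2}^{\R^2}(P_\#\mu,\delta_0)=L$, and a triangle-inequality squeeze forces equality throughout. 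This is more elementary (no duality theorem needed, and the interpolation to a Dirac is just the explicit scaling $((1-s)\,\mathrm{id})_\#\mu'$), and it succeeds for exactly the reason you note: $P_\#\nu_t=\delta_0$ because the planar part of $\nu_t$ sits at the foot of $q$. Your closing caveat that the naive scaling identity $d_{\mathcal{W}_2}((1-t)\delta_q+t\alpha,(1-t)\delta_q+t\beta)=\sqrt{t}\,d_{\mathcal{W}_2}^{\R^2}(\alpha,\beta)$ fails in general (rerouting through $q$ can be cheaper when $\alpha,\beta$ live far from the origin) is accurate and is precisely the obstruction that makes the $p=2$ case harder than the scaling argument used in Section~2 for $p=1$.
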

\begin{proof}
We start by assuming that $\mu \in S_t$, and write $\mu= (1-t)\delta_{q} + t \mu'$ for some $\mu' \in \mathcal{W}_2(\R^2, d_E)$. Consider the map $T:\R^2 \cup \{q\} \to \R^2 \cup \{q\}$ given by $T(q)=q$ and $T(x)=0$ for any $x \in \R^2$. Then $\pi_0=(\mathrm{id} \times T)_{\#}$ is a transport plan (i.e. a coupling) between $\mu$ and $\nu_t$ that sends any mass supported on $\R^2$ to $0$, and does not move the mass at $q$. It has the transport cost 
$$
\int_{\R^2 \cup \{q\} \times \R^2 \cup \{q\}}d_2^2(x,y)  d\pi_0(x,y) = t\int_{\R^2}d_2^2(x,0)d\mu'(x).
$$

To prove the optimality of $\pi_0$ we use the Kantorovich duality. By \cite[Theorem 5.10]{villani_optimal_2009}, the transport plan $\pi_0$ is optimal if and only if there exist $\psi: \R^2 \cup \{q\} \to \R \cup \{+\infty\}$ and $\varphi: \R^2 \cup \{q\} \to \R \cup \{-\infty\}$ such that $\varphi(z)-\psi(x) \leq d_2^2(x,z)$ for all $(x,z) \in \R^2 \cup \{q\} \times \R^2 \cup \{q\},$ with equality $\pi_0$-almost surely. Set
\begin{align} \label{eq:Kantorovich-potential-def}
    \psi(x)=\begin{cases}
        -\norm{x}^2 & \text{if } x \in \R^2 \\
        0 & \text{if } x=q
    \end{cases}
    \text{ and }
    \varphi(z)=\begin{cases}
        0 & \text{if } z \in \{q,0\} \\
        -\infty & \text{otherwise.}
    \end{cases}
\end{align}
The inequality $\varphi(z)-\psi(x) \leq d_2^2(x,z)$ clearly holds when $z \notin \{0,q\}.$ For $z=q,$ we have $0-(-\norm{x}^2)=\norm{x}^2\leq d_2^2(x, q)$ for all $x \in \R^2,$ and $0-0 \leq d_2^2(q,q).$ The $z=0$ case is very similar: $0-(-\norm{x}^2)=\norm{x}^2\leq d_2^2(x, 0)$ for all $x \in \R^2,$ and $0-0 \leq d_2^2(q,0).$ Moreover,
\begin{align} \label{dual-equals-primal}
    \int_{\R^2 \cup \{q\}}\varphi(z) \dd \nu_t(z)- \int_{\R^2 \cup \{q\}} \psi(x) \dd \mu(x)=0+\int_{\R^2} \norm{x}^2 \dd \mu(x),
\end{align}
which is precisely the cost of the transport plan $\pi_0.$ Consequently, $\varphi(z)-\psi(x)= d_2^2(x,z)$ $\pi_0$-almost surely, and hence $\pi_0$ is optimal. Thus we can construct the geodesic
$\gamma: [0,1] \to \mathcal{W}_2(\R^2 \cup \{q\}, d_2)$ by setting
$\gamma(s)=p_{s\#}\pi_0$, where $p_s(x,y)=(1-s)x + sy$. Since $\pi_0(\{q\} \times \R^2)=0$, this geodesic is indeed well-defined.  

To see the converse direction, we adapt an argument given by Maas in \cite{maas_gradient_2011}. Assume that $\gamma: [0,1] \to \mathcal{W}_2(\R^2 \cup \{q\}, d_2)$ is a (constant speed) geodesic connecting $\mu$ and $\nu_t$, that is, 
\begin{align}
d_{\mathcal{W}_2}(\gamma(s),\gamma(s'))=C \abs{s-s'} \text{ for all } s, s' \in [0,1], \, \gamma(0)=\mu, \, \gamma(1)=\nu_t.
\end{align}
Note that one can turn every geodesic into a constant speed geodesic by re-parametrization.
Let $m: [0,1]\to [0,1]$ be defined by the relation $\gamma(s)\in S_{m(s)}$ for all $s \in [0,1],$ that is, $m(s)$ is the mass that $\gamma(s)$ assigns to $\R^2.$ So $\gamma(s)$ is in $S_{m(s)}$ and $\gamma(s')$ is in $S_{m(s')}$ for all $s,s' \in [0,1],$ and hence when transporting from $\gamma(s)$ to $\gamma(s'),$ at least a mass of magnitude $\abs{m(s)-m(s')}$ must be transported for a distance at least $\mathrm{dist}(y_0,\R^2)=1.$ Therefore,
\begin{align}
    \abs{m(s)-m(s')} \leq d_{\mathcal{W}_2}^2(\gamma(s),\gamma(s'))=C^2\abs{s-s'}^2,
\end{align}
which means that $s \mapsto m(s)$ is $2$-H\"older continuous on $[0,1],$ and hence constant. Clearly, $m(1)=t,$ so $m(0)=t$ as well, that is, $\mu \in S_t.$
\end{proof}

A consequence of the above Lemma is that if $\Phi: \mathcal{W}_2(\R^2 \cup \{q\}, d_2) \to \mathcal{W}_2(\R^2 \cup \{q\}, d_2)$ is an isometry, then $\Phi$ preserves $t-$Slices, i.e. $\Phi(S_t)= S_t$ for any $t\in [0,1]$. To be more precise, we have the following result:
\begin{lemma}
\label{lem:slices_preserved_case45}
Consider an isometry $\Phi: \mathcal{W}_2(\R^2 \cup \{q\}, d_2) \to \mathcal{W}_2(\R^2 \cup \{q\}, d_2)$ of the Wasserstein space. Then the following results hold:
\begin{enumerate}
\item The Dirac mass at $q$ is preserved, i.e. $\Phi(\delta_{q})=\delta_{q}$.
\item If $\nu_t=(1-t)\delta_{q} + t \delta_0$, then $\Phi(\nu_t)=\nu_t$.
\item Slices are preserved, that is $\Phi(S_t)=S_t$.
\end{enumerate}
\end{lemma}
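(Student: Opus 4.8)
The plan is to prove the three claims not in the listed order but as (1), then a slice-permutation statement, from which (3) and (2) drop out together. Throughout I would lean on two metric facts that any isometry respects: that a distance-preserving bijection carries constant-speed length-minimizing geodesics to geodesics, and the observation already hidden in the proof of Lemma \ref{lem:slice_geod} that \emph{every} geodesic stays inside a single slice. Indeed, if $\gamma$ is a constant-speed geodesic and $m(s)=\gamma(s)(\R^2)$ is the mass it assigns to the plane, then transporting from $\gamma(s)$ to $\gamma(s')$ forces a mass of at least $\abs{m(s)-m(s')}$ to cross the gap $\mathrm{dist}(q,\R^2)=1$, so $\abs{m(s)-m(s')}\le d_{\mathcal{W}_2}^2(\gamma(s),\gamma(s'))=C^2\abs{s-s'}^2$; the resulting $2$-H\"older continuity makes $m$ constant. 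Call this fact $(\star)$.

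For claim (1) I would characterize $\delta_q$ purely geodesically, as the unique measure from which no geodesic reaches a different measure. That $\delta_q$ has this isolation property is exactly the $t=0$ case of Lemma \ref{lem:slice_geod} (equivalently $(\star)$ together with $S_0=\{\delta_q\}$). Conversely, any $\mu\neq\delta_q$ lies in some $S_t$ with $t>0$, so Lemma \ref{lem:slice_geod} supplies a geodesic from $\mu$ to $\nu_t$; in the only remaining case $\mu=\nu_t$, the explicit path $s\mapsto(1-t)\delta_q+t\delta_{se_1}$ for a fixed unit vector $e_1\in\R^2$ (which keeps the mass at $q$ fixed and is readily checked to be a geodesic on $[0,1]$) connects $\nu_t$ to a distinct measure. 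Hence $\delta_q$ is the \emph{only} geodesically isolated measure, and since $\Phi$ and $\Phi^{-1}$ preserve geodesics this property is $\Phi$-invariant, forcing $\Phi(\delta_q)=\delta_q$.

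Next I would show that $\Phi$ permutes slices. Fix $t$ and take any $\mu\in S_t$. By Lemma \ref{lem:slice_geod} there is a geodesic from $\mu$ to the hub $\nu_t$, so $\Phi$ sends it to a geodesic from $\Phi(\mu)$ to $\Phi(\nu_t)$, and by $(\star)$ these two endpoints share a slice. As this holds for every $\mu\in S_t$, the entire image $\Phi(S_t)$ lies in the single slice containing $\Phi(\nu_t)$, say $S_{\sigma(t)}$. Running the same argument for $\Phi^{-1}$ and invoking the surjectivity trick $S_a=\Phi(\Phi^{-1}(S_a))\subseteq\Phi(S_{\sigma^{-1}(a)})\subseteq S_a$ shows that $\sigma$ is a bijection of $[0,1]$ with $\Phi(S_t)=S_{\sigma(t)}$.

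Finally I would pin down $\sigma=\mathrm{id}$ and obtain (3) and (2) at once, using $\Phi(\delta_q)=\delta_q$. Since the only coupling out of the point mass $\delta_q$ is the product one, a direct computation gives, for $\mu=(1-t)\delta_q+t\mu'\in S_t$,
\[
d_{\mathcal{W}_2}^2(\delta_q,\mu)=t\Big(1+\int_{\R^2}\norm{x}^2\,\dd\mu'(x)\Big)\ge t,
\]
with equality \emph{iff} $\mu=\nu_t$; thus $\inf_{\mu\in S_t}d_{\mathcal{W}_2}(\delta_q,\mu)=\sqrt{t}$, attained only at $\nu_t$. Because $\Phi$ fixes $\delta_q$ and maps $S_t$ isometrically onto $S_{\sigma(t)}$, these slice-minima must agree, so $\sqrt{t}=\sqrt{\sigma(t)}$ and hence $\sigma(t)=t$, which is claim (3). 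Moreover $\Phi$ is forced to send the unique minimizer $\nu_t$ to the unique minimizer of $S_{\sigma(t)}=S_t$, namely $\nu_t$ itself, giving claim (2). The main obstacle is really fact $(\star)$ together with its consequence that the slices are the atoms of the geodesic structure; once the slice-permutation is secured, identifying $\sigma=\mathrm{id}$ is a short extremal computation.
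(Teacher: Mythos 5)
Your proof is correct and follows essentially the same route as the paper: claim (1) via the geodesic isolation of $\delta_q$ (Lemma \ref{lem:slice_geod} with $t=0$), and claims (2)--(3) via the geodesic characterization of slices combined with the fact that $\nu_t$ is the unique closest point of $S_t$ to $\delta_q$, at distance $\sqrt{t}$. The only difference is organizational: you first extract a slice permutation $\sigma$ and then force $\sigma=\mathrm{id}$ by the extremal computation, whereas the paper first pins down $\Phi(\nu_t)=\nu_t$ and then deduces slice preservation.
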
 
\begin{proof}
The first result follows from a simple observation: since $\delta_{q}=\nu_0$ and since $S_0=\{\delta_q\}$ is a slice with a single element (in fact it is the only slice with a single element), by Lemma \ref{lem:slice_geod} there exists no measure $\mu \in \mathcal{W}_2(\R^2 \cup \{q\}, d_2)$ such that there is a geodesic between $\delta_{q}$ and $\mu$ (except the trivial geodesic when $\mu= \delta_q$). Thus there exists no measure $\mu' \in \mathcal{W}_2(\R^2 \cup \{q\}, d_2)$ such that there is a non-trivial geodesic between $\Phi(\delta_{q})$ and $\mu'$. Again by Lemma \ref{lem:slice_geod}, we then have that $\Phi(\delta_{q})\in S_0$, in other words $\Phi(\delta_{q})=\delta_{q}$, proving the first claim. 

To show the second claim, consider for $t \in (0,1]$ the measure $\nu_t$, and assume that $\Phi(\nu_t)\in S_{t'}$ for some $t' \in [0,1]$. By Lemma \ref{lem:slice_geod} there exists a geodesic between $\Phi(\nu_t)$ and $\nu_{t'}$. Since $\Phi$ is an isometry, there then also exists a geodesic between $\nu_t$ and $\Phi^{-1}(\nu_{t'})$, and thus $\Phi^{-1}(\nu_{t'}) \in S_t$. 

From the definition of the distance, it is easy to see that the measure $\nu_t$ is the measure in the slice $S_t$ closest to $\delta_{q}$, i.e. 
\begin{equation}
\label{eq:slice_minimizer}
\nu_t = \argmin_{\mu \in S_t} d_{\mathcal{W}_2}(\mu, \delta_{q}),
\end{equation}
and that this argmin is unique; moreover, $t \to d_{\mathcal{W}_2}(\nu_t, \delta_{q})=\sqrt{t}$ is a strictly increasing function. In particular since 
$$d_{\mathcal{W}_2}(\nu_t, \delta_{q})=d_{\mathcal{W}_2}(\Phi(\nu_t), \Phi(\delta_{q}))=d_{\mathcal{W}_2}(\Phi(\nu_t), \delta_{q}),$$
$\Phi(\nu_t)\in S_{t'}$ implies that 
$$d_{\mathcal{W}_2}(\Phi(\nu_t), \delta_{q}) \geq d_{W_2}(\nu_{t'}, \delta_{q})$$
and that $t' \geq t$. Since $\Phi^{-1}$ is also an isometry and $\Phi^{-1}(\nu_{t'}) \in S_t$, a similar argument shows that $t \geq t'$, implying $t=t'$. Thus, since $\nu_t$ is the unique minimizer of \eqref{eq:slice_minimizer}, the equality $d_{\mathcal{W}_2}(\Phi(\nu_t), \delta_{q}) = d_{\mathcal{W}_2}(\nu_t, \delta_{q})$ implies the second claim. 

The third claim now follows easily: For any $\mu \in S_t$, there exists a geodesic between $\mu$ and $\nu_t$. By the distance-preserving properties of the isometry, there also exists a geodesic between $\Phi(\mu)$ and $\Phi(\nu_t)=\nu_t$, and by Lemma \ref{lem:slice_geod} it follows that $\Phi(\mu) \in S_t$, proving the claim. 
\end{proof}

We now define the projection operator onto the slice $S_1$:
$$
P_{\#}\mu=\argmin_{\nu \in S_1} d_{\mathcal{W}_2}(\mu, \nu).
$$ 
It is easy to see that, if $\mu=(1-t)\delta_{q} + t \mu'$ (with $\mu' \in \mathcal{W}_2(\R^2, d_E)$), then the argmin is unique and given by $P_{\#}\mu=(1-t)\delta_0 + t\mu'$. 

An important property of the projection operator is that it commutes with isometries of the Wasserstein space. 
\begin{lemma}
If $\Phi: \mathcal{W}_2(\R^2 \cup \{q\}, d_2) \to \mathcal{W}_2(\R^2 \cup \{q\}, d_2)$ is an isometry of the Wasserstein space, we have 
\begin{equation}
    \label{eq:projection_switch}
    P_{\#}(\Phi(\mu))=\Phi(P_{\#}(\mu)).
\end{equation}  
\end{lemma}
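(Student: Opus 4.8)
The plan is to exploit the fact that $P_{\#}$ is defined purely metrically, as the nearest-point projection onto the slice $S_1$, so that an isometry which preserves $S_1$ is forced to intertwine with it. The three ingredients I would use are: that $\Phi$ restricts to a bijection of $S_1$ onto itself, that $\Phi$ preserves Wasserstein distances, and that the minimizer defining $P_{\#}\mu$ is unique.

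First I would record that, by the third part of Lemma \ref{lem:slices_preserved_case45}, we have $\Phi(S_1)=S_1$; since $\Phi$ is a bijection of the whole Wasserstein space (being a surjective, hence injective, distance-preserving map), its restriction $\Phi|_{S_1}\colon S_1 \to S_1$ is again a bijection. In particular, as $\nu$ ranges over all of $S_1$, the image $\Phi(\nu)$ also ranges over all of $S_1$. Next I would write out the defining variational problem
\[
P_{\#}(\Phi(\mu))=\argmin_{\eta \in S_1} d_{\mathcal{W}_2}(\Phi(\mu),\eta),
\]
and perform the change of variables $\eta=\Phi(\nu)$ with $\nu \in S_1$, which is legitimate by the previous observation. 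The isometric property of $\Phi$ gives $d_{\mathcal{W}_2}(\Phi(\mu),\Phi(\nu))=d_{\mathcal{W}_2}(\mu,\nu)$, so minimizing over $\eta \in S_1$ is the same as minimizing $d_{\mathcal{W}_2}(\mu,\nu)$ over $\nu \in S_1$. The latter problem is solved uniquely by $\nu=P_{\#}\mu$, whence the minimizing $\eta$ equals $\Phi(P_{\#}\mu)$, i.e. $P_{\#}(\Phi(\mu))=\Phi(P_{\#}\mu)$.

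The argument is short, and its only delicate point is that the substitution $\eta=\Phi(\nu)$ genuinely sets up a bijection between the two feasible sets; this is precisely where the slice-preservation $\Phi(S_1)=S_1$ together with surjectivity of $\Phi$ is indispensable, and I would state it explicitly rather than leave it implicit. I would also make clear that uniqueness of the argmin (already established in the excerpt, since $P_{\#}\mu=(1-t)\delta_0+t\mu'$ is the unique nearest point in $S_1$) is exactly what upgrades the equality of minimal values into an equality of minimizers, so that the conclusion is an honest identity of measures and not merely of distances.
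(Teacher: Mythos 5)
Your argument is correct and is essentially the same as the paper's: both exploit $\Phi(S_1)=S_1$ together with the distance-preserving property to rewrite the variational problem defining $P_{\#}(\Phi(\mu))$ as the one defining $P_{\#}(\mu)$, composed with $\Phi$. Your version merely spells out more explicitly the change of variables and the role of uniqueness of the minimizer, which the paper compresses into a single chain of equalities.
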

\begin{proof}
This property follows simply from the facts that $\Phi(S_1) = S_1$ and that isometries are surjective and distance-preserving:
$$
P_{\#}(\Phi(\mu))=\argmin_{\nu \in S_1} d_{\mathcal{W}_2}(\Phi(\mu), \nu)=\argmin_{\nu \in S_1} d_{\mathcal{W}_2}(\mu, \Phi^{-1}(\nu))= \Phi(P_{\#}(\mu)).
$$
\end{proof}

\begin{proof}[Proof of Theorem \ref{thm_2}]
Using the previous results, we now study how isometries act on the $1$-slice $S_1$. Since $S_1$ is isometric to $\mathcal{W}_2(\R^2, d_E)$, we can think of $\Phi_{|S_1}$ as an isometry of $\mathcal{W}_2(\R^2, d_E)$. Thus there exists an isometry $\varphi$ of $(\R^2 \cup \{q\}, d_2)$ such that $\Phi \circ \varphi_{\#}$ preserves Dirac masses, $(\Phi \circ \varphi_{\#})(\delta_x)=\delta_x$ for any $x \in \R^2$. Then, up to composing with a trivial isometry we can assume that $\Phi(\delta_x)=\delta_x$. 

Thus, by the result of Kloeckner \cite{kloeckner_geometric_2010}, $\Phi_{|S_1}$ acts by rotating or reflecting measures around their center of mass. We consider the measure $\mu'=\frac{1}{3}\delta_{(0,0)} + \frac{2}{3}\delta_{(1,0)}$, then the center of mass is $x_0=(\frac{2}{3}, 0)$ and the image $\Phi(\mu')$ is given by 
$$
\Phi(\mu')=\frac{1}{3}\delta_{x_0 + 2/3(-\cos(\vartheta), -\sin(\vartheta))} + \frac{2}{3}\delta_{x_0 + 1/3 (\cos(\vartheta), \sin(\vartheta))}
$$
for some angle $\vartheta \in [0, 2\pi)$. In particular, unless $\vartheta =0$ (which is the case when $\Phi_{|S_1}$ is the identity), $\Phi(\mu')$ is not supported on the point $0 \in \R^2$. 

Now consider the measure $\mu\in S_{2/3}$ given by $\mu=\frac{1}{3}\delta_{q} + \frac{2}{3}\delta_{(1,0)}$. By Lemma \ref{lem:slices_preserved_case45} we have that $\Phi(\mu) \in S_{2/3}$. Using Lemma \ref{eq:projection_switch}, since $P_{\#}\mu=\frac{1}{3}\delta_{0} + \frac{2}{3}\delta_{(1,0)}=\mu'$, we also have that $\Phi(\mu')=P_{\#}(\Phi(\mu))$. Since $\Phi(\mu) \in S_{2/3}$, $P_{\#}(\Phi(\mu))(\{0\}) \geq 1/3 >0$, but as the above equation shows, $\Phi(\mu')(\{0\})=0$, unless $\vartheta=0$, for which $\Phi$ is the identity on $S_1$. Thus, up to composing with a trivial isometry, we have that $\Phi_{|S_1}$ is the identity. 

To finish the proof, consider an isometry $\Phi$ of $\mathcal{W}_2(\R^2 \cup \{q\}, d_2)$ such that $\Phi_{|S_1}$ is the identity, and consider $\mu \in S_t$ for $t \in [0,1]$. Restricted to the slice $S_t$, the projection operator becomes injective. Indeed, for $\nu_1, \nu_2 \in S_t$, we have 
$P_{\#}\nu_1= (1-t) \delta_0 + t \nu_1'$ and $P_{\#}\nu_2= (1-t) \delta_0 + t \nu_2'$; thus, if $P_{\#}\nu_1=P_{\#}\nu_1$, we have $\nu_1'=\nu_2'$, and thus $\nu_1=\nu_2$. 

Since $\Phi_{|S_1}$ is the identity and $P_{\#}\mu\in S_1$, equation \eqref{eq:projection_switch} becomes   
\begin{equation}
\label{eq:counter}
P_{\#}(\mu)=P_{\#}(\Phi(\mu)).
\end{equation}

As by Lemma \ref{lem:slices_preserved_case45} $\Phi(\mu) \in S_t$ if $\mu \in S_t$, the injectivity of $P_{\#}$ over measures of $S_t$ thus implies that $\mu=\Phi(\mu)$. Therefore the space $\mathcal{W}_2(\R^2 \cup \{q\}, d_2)$ is isometrically rigid. 
\end{proof}

\paragraph*{{\bf Acknowledgments}} We thank Tam\'as Titkos for helpful discussions on the topic.


\bibliographystyle{abbrv}
\bibliography{Rigid_plus_a_point_paper.bib}

@book{ambrosio_lectures_2021,
	address = {Cham},
	series = {Unitext},
	title = {Lectures on optimal transport},
	isbn = {978-3-030-72162-6 978-3-030-72161-9},
	language = {eng},
	number = {130},
	publisher = {Springer},
	author = {Ambrosio, Luigi and Brué, Elia and Semola, Daniele},
	year = {2021},
}

@book{figalli_invitation_2021,
	address = {Berlin, Germany},
	series = {{EMS} textbooks in mathematics},
	title = {An invitation to optimal transport, {Wasserstein} distances, and gradient flows},
	isbn = {978-3-98547-010-5},
	publisher = {EMS Press},
	author = {Figalli, Alessio and Glaudo, Federico},
	year = {2021},
	keywords = {Flows (Differentiable dynamical systems), Mathematical optimization, Transportation problems (Programming)},
}

@book{villani_optimal_2009,
    address = {Berlin, Heidelberg},
	series = {Mathematics and {Statistics}},
	title = {Optimal transport: old and new},
	isbn = {978-3-540-71050-9},
	shorttitle = {Optimal transport},
	language = {eng},
	number = {338},
	publisher = {Springer Berlin Heidelberg},
	author = {Villani, Cédric},
	year = {2009},
}

@misc{che_isometric_2024,
	title = {Isometric {Rigidity} of {Metric} {Constructions} with respect to {Wasserstein} {Spaces}},
	url = {http://arxiv.org/abs/2410.14648},
	doi = {10.48550/arXiv.2410.14648},
	publisher = {arXiv},
	author = {Che, Mauricio and Galaz-García, Fernando and Kerin, Martin and Santos-Rodríguez, Jaime},
	month = oct,
	year = {2024},
	note = {arXiv:2410.14648},
	}

@article{geher_isometry_2022,
	title = {The isometry group of {Wasserstein} spaces: the {Hilbertian} case},
	volume = {106},
	issn = {0024-6107, 1469-7750},
	shorttitle = {The isometry group of {Wasserstein} spaces},
	url = {https://onlinelibrary.wiley.com/doi/10.1112/jlms.12676},
	doi = {10.1112/jlms.12676},
	language = {en},
	number = {4},
	journal = {Journal of the London Mathematical Society},
	publisher = {Wiley},
	author = {Gehér, György Pál and Titkos, Tamás and Virosztek, Dániel},
	month = dec,
	year = {2022},
	pages = {3865--3894},
}

@article{geher_isometric_2020,
	title = {Isometric study of {Wasserstein} spaces – the real line},
	volume = {373},
	copyright = {https://www.ams.org/publications/copyright-and-permissions},
	issn = {0002-9947, 1088-6850},
	url = {https://www.ams.org/tran/2020-373-08/S0002-9947-2020-08113-6/},
	doi = {10.1090/tran/8113},
	language = {en},
	number = {8},
	journal = {Transactions of the American Mathematical Society},
	publisher = {American Mathematical Society (AMS)},
	author = {Gehér, György and Titkos, Tamás and Virosztek, Dániel},
	month = may,
	year = {2020},
	pages = {5855--5883},
}

@article{kloeckner_geometric_2010,
	title = {A {Geometric} study of {Wasserstein} spaces: {Euclidean} spaces},
    volume = {IX},
    issn = {2036-2145, 0391-173X},
	shorttitle = {A {Geometric} study of {Wasserstein} spaces},
	url = {https://journals.sns.it/index.php/annaliscienze/article/view/203},
	doi = {10.2422/2036-2145.2010.2.03},
	journal = {Annali Scuola Normale Superiore - Classe di Scienze},
	publisher = {Scuola Normale Superiore - Edizioni della Normale},
	author = {Kloeckner, Benoît},
	number = {2},
    month = jun,
	year = {2010},
    pages = {297--323},
}

@article{bertrand_geometric_2012,
	title = {A {Geometric} study of {Wasserstein} spaces: {Hadamard} spaces},
	volume = {04},
	issn = {1793-5253, 1793-7167},
	shorttitle = {A {GEOMETRIC} {STUDY} {OF} {WASSERSTEIN} {SPACES}},
	url = {https://www.worldscientific.com/doi/abs/10.1142/S1793525312500227},
	doi = {10.1142/S1793525312500227},
	language = {en},
	number = {04},
	journal = {Journal of Topology and Analysis},
	author = {Bertrand, Jérôme and Kloeckner, Benoît},
	month = dec,
	year = {2012},
	pages = {515--542},
}

@article{santos-rodriguez_isometries_2022,
	title = {On isometries of compact {L}–{Wasserstein} spaces},
	volume = {409},
	issn = {00018708},
	url = {https://linkinghub.elsevier.com/retrieve/pii/S0001870822004492},
	doi = {10.1016/j.aim.2022.108632},
	language = {en},
	journal = {Advances in Mathematics},
	author = {Santos-Rodríguez, Jaime},
	month = nov,
	year = {2022},
	pages = {108632},
}

@article{balogh_isometric_2026,
	title = {Isometric {Rigidity} of the {Wasserstein} {Space} $\mathcal{W}_1(\textbf{G})$ {Over} {Carnot} {Groups}},
	volume = {64},
	issn = {0926-2601, 1572-929X},
	url = {https://link.springer.com/10.1007/s11118-025-10252-x},
	doi = {10.1007/s11118-025-10252-x},
	language = {en},
	number = {1},
	journal = {Potential Analysis},
	author = {Balogh, Zoltán M. and Titkos, Tamás and Virosztek, Dániel},
	month = jan,
	year = {2026},
	pages = {1},
}

@misc{balogh_wasserstein_2025,
	title = {Wasserstein {Rigidity} over $\mathbb{R}^n$ with smooth norms},
	url = {http://arxiv.org/abs/2511.03640},
	doi = {10.48550/arXiv.2511.03640},
	publisher = {arXiv},
	author = {Balogh, Zoltán M. and Ströher, Eric and Titkos, Tamás and Virosztek, Dániel},
	month = nov,
	year = {2025},
	note = {arXiv:2511.03640},
	keywords = {Mathematics - Metric Geometry},
}

@article{maas_gradient_2011,
	title = {Gradient flows of the entropy for finite {Markov} chains},
	volume = {261},
	copyright = {https://www.elsevier.com/tdm/userlicense/1.0/},
	issn = {00221236},
	url = {https://linkinghub.elsevier.com/retrieve/pii/S0022123611002278},
	doi = {10.1016/j.jfa.2011.06.009},
	language = {en},
	number = {8},
	journal = {Journal of Functional Analysis},
	author = {Maas, Jan},
	month = oct,
	year = {2011},
	pages = {2250--2292},
}

@misc{stroeher_lp_case,
	title = {Wasserstein Rigidity for $l_q$ Norms},
	author = {Ströher, Eric and Virosztek, Dániel},
    note = {In preparation},
}

@article{balogh_isometries_2025,
	title = {Isometries and isometric embeddings of {Wasserstein} spaces over the {Heisenberg} group},
	volume = {41},
	issn = {0213-2230, 2235-0616},
	url = {https://ems.press/doi/10.4171/rmi/1576},
	doi = {10.4171/rmi/1576},
	number = {6},
	journal = {Revista Matemática Iberoamericana},
	author = {Balogh, Zoltán M. and Titkos, Tamás and Virosztek, Dániel},
	month = jul,
	year = {2025},
	pages = {2055--2084},
}

\end{document}